\renewenvironment{enumerate}{\begin{enumorig}[label=\textup{(\arabic*)}, noitemsep, topsep=3pt plus 3pt, leftmargin=*]}{\end{enumorig}}
\newenvironment{enumerates}{\begin{enumorig}[label=\textup{($*$)}, noitemsep, topsep=3pt plus 3pt, leftmargin=*]}{\end{enumorig}}
\renewenvironment{itemize}{\begin{itemorig}[label=\textbullet, noitemsep, topsep=3pt plus 3pt, leftmargin=1.1em]}{\end{itemorig}}
\newtheorem{thm}{Theorem}
\newtheorem{conj}[thm]{Conjecture}
\newtheorem{lem}[thm]{Lemma}
\def\clF{\mathscr{F}}
\def\clG{\mathscr{G}}
\def\clI{\mathscr{I}}
\def\clP{\mathscr{P}}
\def\clQ{\mathscr{Q}}
\def\clT{\mathscr{T}}
\def\posP{\mathcal{P}}
\def\posQ{\mathcal{Q}}
\def\scrI{\mathscr{I}}
\let\leq\leqslant
\let\geq\geqslant
\let\setminus\smallsetminus
\let\Gamma\varGamma
\let\delete\xi
\DeclareMathOperator{\col}{col}
\def\valg{\chi_{\mathrm{g}}}
\def\vala{\chi_{\mathrm{a}}}
\def\grg{\Gamma_{\mathrm{g}}}
\def\gra{\Gamma_{\mathrm{a}}}
\def\colg{\col_{\mathrm{g}}}
\title{Asymmetric coloring games on incomparability graphs}
\g@addto@macro{\@settitle}{\begin{center}\bfseries extended abstract\end{center}}
\author{Tomasz Krawczyk\and Bartosz Walczak}
\thanks{Theoretical Computer Science Department, Faculty of Mathematics and Computer Science, Jagiellonian University, Krak\'ow, Poland; \textit{E-mail addresses}: \texttt{krawczyk@tcs.uj.edu.pl}, \texttt{walczak@tcs.uj.edu.pl}}
\thanks{The authors were supported by Polish National Science Center grant 2011/03/B/ST6/01367.}
\begin{document}

\begin{abstract}
Consider the following game on a graph $G$: Alice and Bob take turns coloring the vertices of $G$ properly from a fixed set of colors; Alice wins when the entire graph has been colored, while Bob wins when some uncolored vertices have been left.
The \emph{game chromatic number} of $G$ is the minimum number of colors that allows Alice to win the game.
The \emph{game Grundy number} of $G$ is defined similarly except that the players color the vertices according to the first-fit rule and they only decide on the order in which it is applied.
The \emph{$(a,b)$-game chromatic} and \emph{Grundy numbers} are defined likewise except that Alice colors $a$ vertices and Bob colors $b$ vertices in each round.
We study the behavior of these parameters for incomparability graphs of posets with bounded width.
We conjecture a complete characterization of the pairs $(a,b)$ for which the $(a,b)$-game chromatic and Grundy numbers are bounded in terms of the width of the poset; we prove that it gives a necessary condition and provide some evidence for its sufficiency.
We also show that the game chromatic number is not bounded in terms of the Grundy number, which answers a question of Havet and Zhu.
\end{abstract}

\maketitle

\vspace{-3.2ex}

\section{Introduction}

\subsection*{Definitions}
A \emph{proper coloring} of a graph $G$ is an assignment of colors to the vertices of $G$ such that no two adjacent ones have the same color.
The minimum number of colors in a proper coloring of $G$ is called the \emph{chromatic number} of $G$ and denoted by $\chi(G)$.
Assuming that the colors are ordered, a simple procedure to construct a proper coloring of $G$ is to process the vertices of $G$ one by one and let each vertex $v$ be assigned the least color that has not been used before on any neighbor of $v$.
Such a proper coloring of $G$ is called a \emph{first-fit coloring} and depends on the order in which the vertices of $G$ are processed.
Although there is always an order of the vertices of $G$ for which the first-fit coloring is optimal, it can use a lot more than $\chi(G)$ colors in general.
The maximum number of colors in a first-fit coloring of $G$ over all orders of the vertices of $G$ is called the \emph{Grundy number} of $G$ and denoted by $\Gamma(G)$.
The \emph{coloring number} of $G$, denoted by $\col(G)$, is the least integer $k$ for which there is a linear order of the vertices of $G$ such that every vertex has fewer than $k$ neighbors earlier in the order.
We have $\chi(G)\leq\col(G)$, as the first-fit coloring of $G$ according to the order witnessing $\col(G)$ uses at most $\col(G)$ colors.

The game variants of the parameters defined above are defined through games played on the graph by two players, Alice and Bob.
In the \emph{coloring game} on $G$, Alice and Bob take turns coloring vertices of $G$ properly from a fixed set of colors until one of the players cannot make a legal move; Alice wins when the entire graph has been colored, while Bob wins when some uncolored vertices have been left.
The \emph{game chromatic number} of $G$, denoted by $\valg(G)$, is the minimum number of colors for which Alice has a winning strategy in the coloring game on $G$.
In the \emph{Grundy game} on $G$, Alice and Bob take turns choosing vertices of $G$, which are then colored by the first-fit procedure in the order they have been chosen.
The \emph{game Grundy number} of $G$, denoted by $\grg(G)$, is the least integer $k$ for which Alice can ensure that the resulting first-fit coloring of $G$ uses at most $k$ colors.
In the \emph{marking game} on $G$, Alice and Bob take turns marking vertices of $G$.
The \emph{game coloring number} of $G$, denoted by $\colg(G)$, is the least integer $k$ for which Alice can ensure that every vertex has fewer than $k$ marked neighbors at the time it is marked.
It is clear that $\valg(G),\grg(G),\col(G)\in[\chi(G),\colg(G)]$ and $\grg(G)\leq\Gamma(G)$.

The three games defined above have asymmetric variants, the \emph{$(a,b)$-coloring game}, the \emph{$(a,b)$-Grundy game}, and the \emph{$(a,b)$-marking game}, respectively, which differ from the ordinary ones in that Alice colors/chooses/marks $a$ vertices and Bob colors/chooses/marks $b$ vertices in each round.
The \emph{$(a,b)$-game chromatic number} of $G$, denoted by $\valg(a,b,G)$, the \emph{$(a,b)$-game Grundy number} of $G$, denoted by $\grg(a,b,G)$, and the \emph{$(a,b)$-game coloring number}, denoted by $\colg(a,b,G)$, are defined like $\valg(G)$, $\grg(G)$, and $\colg(G)$, respectively, but using the asymmetric variants of the games.
It follows that $\valg(1,0,G)=\grg(1,0,G)=\chi(G)$, $\grg(0,1,G)=\Gamma(G)$, $\colg(1,0,G)=\col(G)$, $\valg(1,1,G)=\valg(G)$, $\grg(1,1,G)=\grg(G)$, and $\colg(1,1,G)=\colg(G)$.

For a class of graphs $\clG$, let $\valg(\clG)$, $\grg(\clG)$, and $\colg(\clG)$ denote the maximum of $\valg(G)$, $\grg(G)$, and $\colg(G)$, respectively, over all graphs $G\in\clG$, and similarly for the asymmetric variants.

\subsection*{History}
The coloring game was first introduced by Steven J. Brams, see \cite{Gar81}, in the context of colorings of planar maps.
Later it was reinvented by Bodlaender \cite{Bod91}, who defined the game chromatic number and asked whether it is finite for the class of planar graphs.
The problem of estimating the game chromatic number gained a lot of attention and was studied not only for planar graphs but also for various other classes of graphs.
The marking game was explicitly introduced by Zhu \cite{Zhu00} as a tool for bounding the game chromatic number from above, although it was used implicitly already before.

The following benchmark classes of graphs were considered for the game chromatic and coloring numbers: forests $\clF$, outerplanar graphs $\clQ$, planar graphs $\clP$, interval graphs $\clI_k$ with clique number $k$, and partial $k$-trees $\clT_k$.
Bodlaender \cite{Bod91} proved $4\leq\valg(\clF)$.
Faigle, Kern, Kierstead, and Trotter \cite{FKKT93} proved $\valg(\clF)=\colg(\clF)=4$ and $2k\leq\valg(\clI_k)\leq\colg(\clI_k)=3k-2$.
Kierstead and Tuza \cite{KT96}, Guan and Zhu \cite{GZ99}, and Kierstead and Yang \cite{KY05} proved $6\leq\valg(\clQ)\leq\colg(\clQ)=7$.
Wu and Zhu \cite{WZ08} and Zhu \cite{Zhu00} proved $\colg(\clT_k)=3k+2$.
Kierstead and Trotter \cite{KT94} proved $\valg(\clP)\leq 33$, which was later improved many times: to $\valg(\clP)\leq 30$ by Dinski and Zhu \cite{DZ99}, to $\colg(\clP)\leq 19$ by Zhu \cite{Zhu99}, to $\colg(\clP)\leq 18$ by Kierstead \cite{Kie00}, and finally to $\colg(\clP)\leq 17$ by Zhu \cite{Zhu08}.
From below, Kierstead and Tuza \cite{KT96} proved $8\leq\valg(\clP)$, and Wu and Zhu \cite{WZ08} proved $11\leq\colg(\clP)$.
Kierstead \cite{Kie00} devised a powerful \emph{activation strategy} that gives all the above-mentioned upper bounds on the game coloring number except the bound $\colg(\clP)\leq 17$, which was obtained using an enhancement of the activation strategy.

The asymmetric coloring and marking games were introduced by Kierstead \cite{Kie05}, who determined $\valg(a,b,\clF)$ and $\colg(a,b,\clF)$ for all $a$ and $b$.
Kierstead and Yang \cite{KY05} devised a \emph{harmonious strategy} for Alice that gives very strong bounds on the $(a,b)$-game coloring numbers for graphs admitting an edge orientation with maximum in-degree at most $\frac{a}{b}$.
They used it to provide upper bounds on asymmetric coloring numbers for interval, chordal, outerplanar, and planar graphs.
The bounds related to interval and chordal graphs were further improved by Yang and Zhu \cite{YZ08}, who devised a method generalizing both the activation and the harmonious strategy.

The Grundy game was introduced recently by Havet and Zhu \cite{HZ13}, who showed that $\grg(\clF)=3$ and $\grg(\clQ)\leq 7$.
To our knowledge, the asymmetric Grundy games were not considered yet.

\subsection*{Results}
We investigate the asymmetric game chromatic number and the asymmetric game Grundy number in the class of incomparability graphs of partially ordered sets (posets).
The \emph{incomparability graph} of a poset $\posP=(P,\leq)$ is the graph with vertex set $P$ in which two points of $P$ are connected by an edge if and only if they are $\leq$-incomparable.
The \emph{width} of a poset $\posP$ is the maximum size of an antichain in $\posP$, which is the maximum size of a clique in the incomparability graph of $\posP$.
For a poset $\posP$, let $\valg(a,b,\posP)$ and $\grg(a,b,\posP)$ denote the $(a,b)$-game chromatic number and the $(a,b)$-game Grundy number, respectively, of the incomparability graph of $\posP$.
A proper coloring of the incomparability graph of $\posP$ with $k$ colors is equivalent to a partition of $\posP$ into $k$ chains, and by Dilworth's theorem, the chromatic number of the incomparability graph of $\posP$ is equal to the width of $\posP$.
From now on, we assume that the $(a,b)$-coloring game and the $(a,b)$-Grundy game are played directly on the poset $\posP$ so that the points of each color form a chain in $\posP$ and the other rules of the games are applied accordingly.

Our main concern is how large $\valg(a,b,\posP)$ and $\grg(a,b,\posP)$ can be in comparison to the width of $\posP$ for different values of $a$ and $b$.
For $w\geq 1$, let $\valg(a,b,w)$ and $\grg(a,b,w)$ denote the~maximum of $\valg(a,b,\posP)$ and $\grg(a,b,\posP)$, respectively, over all posets $\posP$ of width $w$.
Since the incomparability graphs of posets of width $2$ have unbounded coloring number, the game coloring number is not interesting in this context, and the activation and harmonious strategies give no useful bounds.

\begin{conj}
\label{conj:chromatic_game}
Let\/ $w\geq 2$.
We have\/ $\valg(a,b,w)<\infty$ if and only if\/ $\frac{a}{b}\geq 2$.
\end{conj}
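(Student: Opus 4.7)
The plan is to establish Conjecture~\ref{conj:chromatic_game} in two parts: the necessity of $a \geq 2b$ unconditionally, and evidence for sufficiency in restricted regimes.

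For necessity, I would construct, for each integer $k$, a poset $\posP_k$ of width $2$ on which Bob forces at least $k$ chains in the $(a,b)$-coloring game whenever $a < 2b$. The natural framework is a recursive construction: $\posP_k$ is built from many nested copies of $\posP_{k-1}$, and Bob's strategy is hierarchical, using his $b$ moves per round to plant fresh ``focus points'' inside copies that Alice's $a$ moves have not yet reached. The driving combinatorial fact is that when $a < 2b$, Alice's $a$ responses per round cannot keep up with Bob's $b$ threats provided each threat is engineered to demand two separate Alice moves to defuse; Bob can then open strictly more new copies each round than Alice can close, and iterating this $k$ times forces $k$ chains.

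The key step, and the one I expect to be hardest to arrange cleanly, is designing $\posP_k$ so that distinct active branches in Bob's strategy demand pairwise distinct chains. This is the standard obstacle for game-coloring lower bounds on posets: a chain in an incomparability graph can weave back and forth across the poset in unexpected ways, so locality of Bob's threats must be enforced by the order itself. My intended device is to give each copy two comparable ``rails'' with a comparability pattern that forces any chain entering a sub-copy to commit early to one rail, rendering it unusable in the parallel sub-copies. Given such a construction, a potential-function argument tracking the number of chains still usable inside Bob's currently active region should yield $\valg(a,b,\posP_k) \geq k$.

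For sufficiency, the intended evidence is upper bounds in restricted regimes, most naturally $b = 1$ with $a \geq 2$. Here I would adapt Kierstead's activation strategy to the incomparability setting: Alice maintains a partial chain partition and each turn activates a bounded number of ``dangerous'' points near Bob's move, coloring them so that every chain used so far remains usable in a bounded neighborhood of Bob's committed region. The main obstacle, and the reason the conjecture remains open in general, is that incomparability graphs of posets of width $w$ have unbounded coloring number, so Alice cannot rely on a prefabricated vertex ordering; her strategy must use the poset order itself, and controlling the number of chains in the presence of long incomparable ladders appears to require a delicate amortized analysis that I do not yet see how to push from $b = 1$ to the full range $a \geq 2b$.
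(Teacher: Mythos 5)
The statement you are addressing is a conjecture, and the paper does not prove it either: it establishes only the ``only if'' direction (Theorem~\ref{thm:Bob_infinite_strategy_coloring}) and gives evidence for the ``if'' direction in the single case $(a,b)=(2,1)$ (Theorem~\ref{thm:upper_bound}). Measured against that, your proposal has a concrete gap on each side. For necessity, the paper's route is quite different from your recursive nested-copies construction: it first reduces the $(a,b)$-game to an \emph{auxiliary} $(\lfloor\frac{a}{b}\rfloor,1)$-game in which Bob moves first and Alice replies with at most $\lfloor\frac{a}{b}\rfloor$ moves (Lemma~\ref{lem:auxiliary_coloring_game}, obtained by stacking many pairwise comparable copies of a fixed poset $\posQ$ and letting Bob play in parallel, discarding the copies Alice ``invalidates''); since $\frac{a}{b}<2$ forces $\lfloor\frac{a}{b}\rfloor=1$, it then suffices to win the auxiliary $(1,1)$-game on a single width-$2$ poset made of two chains $C$ and $C'$, where for every interval $I$ of $C$ containing an endpoint of $C$ the chain $C'$ carries $2k$ points whose incomparability trace on $C$ is exactly $I$ (Lemma~\ref{lem:1_1_coloring_lower_bound}); Bob grows a rainbow set $A\subseteq C'$ of colored points all incomparable to a common, slowly shrinking interval of uncolored points of $C$. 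Your sketch never resolves the step you yourself flag as hardest --- forcing distinct active branches to demand distinct chains --- and that is exactly the difficulty the interval-trace construction is built to sidestep: locality is enforced not by ``rails'' inside recursive copies but by prescribing, for each relevant interval of $C$, many points of $C'$ whose incomparability neighborhood in $C$ is precisely that interval. Without an explicit construction doing this work, your potential-function argument has nothing to run on.

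For sufficiency, your plan to adapt Kierstead's activation strategy cannot work as stated: the paper explicitly notes that incomparability graphs of posets of width $2$ already have unbounded coloring number, so the activation and harmonious strategies give no useful bounds here --- there is no bounded-back-degree vertex ordering for Alice to protect. The paper's actual upper bound for $(2,1)$ uses an entirely different device: Alice fixes a chain partition $C_1,\ldots,C_w$ and, on each chain, runs a Presenter/Painter ``$w$-game'' in which Bob's move on a point $x\notin C_i$ is translated into presenting the interval of $C_i$ incomparable to $x$ together with a forbidden color; an induction that doubles the color set at each level shows Painter wins with $2^{w-1}$ colors per chain, giving $w2^{w-1}$ overall. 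If you want to push the ``if'' direction beyond $(2,1)$, that interval game, not the activation strategy, is the mechanism to generalize.
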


\begin{conj}
\label{conj:Grundy_game}
Let\/ $w\geq 2$.
We have\/ $\grg(a,b,w)<\infty$ if and only if\/ $\frac{a}{b}\geq 1$.
\end{conj}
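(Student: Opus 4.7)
The conjecture splits into necessity ($\grg(a,b,w)<\infty$ implies $a\geq b$) and sufficiency (the converse). My plan is to prove necessity by an explicit recursive construction of width-$2$ witness posets for Bob, and to outline a strategy-based approach to sufficiency; the latter is the main obstacle.

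For necessity, I would prove by induction on $k$ that if $a<b$, there is a width-$2$ poset $\posP_k$ on which Bob has a strategy to force the first-fit coloring to use at least $k$ colors. Given $\posP_k$, the poset $\posP_{k+1}$ is built from many shifted copies $\posP_k^{(1)},\dots,\posP_k^{(N)}$, interleaved along two chains, together with a ``capping'' vertex above each copy whose first-fit color is forced to $k+1$ provided its copy has already been fully played out to color $k$. Bob treats the copies as independent subgames and follows the induction hypothesis inside whichever copy is most advanced; since $b>a$, he can invest at least one surplus move per round into copies Alice is ignoring. For $N$ sufficiently large, a pigeonhole argument guarantees that some copy is completed undisturbed, after which its capping vertex produces color $k+1$. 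The technical delicacies are that only two chains are available, which forces a zigzag arrangement with enough padding between copies, and that the incomparabilities between copies must be arranged so that the capping vertices cannot escape to a lower first-fit color because of small-color neighbors in adjacent copies.

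For sufficiency, the plan is to design an Alice strategy in the spirit of the activation and harmonious strategies of Kierstead and Yang, adapted to the first-fit rule. Alice would maintain a bounded ``hazard set'' of uncolored points together with an invariant bounding, for each hazard point, the number of distinct colors already appearing in its incomparability neighborhood; her $a\geq b$ moves per round would either color hazard points with small colors or preempt potential high colors by playing vertices whose first-fit color is provably small. The main difficulty is that, unlike in the marking or coloring games, the first-fit color of any future move depends on the entire coloring history, so local invariants do not translate directly into bounds on future colors; a potential-function argument combining a chain decomposition of width $w$ with an amortized accounting of Bob's investments into long Grundy towers appears to be the right tool. Extracting an explicit $f(a,b,w)$ is likely to require genuinely new ideas, and progress on the parallel Conjecture~\ref{conj:chromatic_game} (where Alice has direct control over the color used) would probably inform the construction here.
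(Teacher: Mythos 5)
This statement is a conjecture, and the paper does not prove it: only the ``only if'' half is established there (as Theorem~\ref{thm:Bob_infinite_strategy_Grundy}), while the authors explicitly state that they can prove $\grg(1,1,w)<\infty$ only for $w\leq 3$. Your proposal mirrors this situation --- your ``sufficiency'' part is an avowedly speculative plan, not a proof --- so the honest verdict is that neither half of your write-up closes the gap that the paper itself leaves open, and the part you do sketch should be measured against Theorem~\ref{thm:Bob_infinite_strategy_Grundy} rather than against a proof of the conjecture.

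For the necessity direction your overall shape is right and close in spirit to the paper's: many parallel copies of a witness poset, Bob advancing a subgame in each, Alice able to spoil only boundedly many copies per round because $a<b$, and a surviving copy forcing a large first-fit color. The paper, however, cleanly factors this into two pieces: Lemma~\ref{lem:auxiliary_coloring_game} reduces the $(a,b)$-Grundy game to the auxiliary $(\lfloor\frac{a}{b}\rfloor,1)$-game, which for $\frac{a}{b}<1$ means Alice colors nothing at all, so the auxiliary game degenerates to plain first-fit on a width-$2$ poset; Kierstead's Theorem~\ref{thm:First-Fit-infinity} then finishes. Your sketch instead tries to rebuild the width-$2$ first-fit lower bound from scratch (the recursive ``capping vertex'' construction), and that is precisely where your argument has a genuine gap: arranging, inside only two chains, that each capping vertex sees all of colors $1,\ldots,k$ among its incomparable neighbors and cannot ``escape to a lower first-fit color'' is the entire content of Kierstead's construction, and your one-sentence acknowledgment of this ``technical delicacy'' does not supply it. A second, smaller concern is your claim that pigeonhole yields a copy ``completed undisturbed'': the number of rounds, and hence the number of copies Alice can touch, grows with the size of the poset, so the counting must be done as in Lemma~\ref{lem:auxiliary_coloring_game} (Bob advances the most-advanced live copies, creating $b$ progress units per round against Alice's at most $a$ spoiled copies), not by a static pigeonhole over a fixed number of Alice moves. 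For the sufficiency direction you correctly identify the core obstruction --- first-fit colors depend on the global history, so marking-game invariants do not transfer --- but no argument is given, and indeed the paper's Theorem~\ref{thm:First-Fit-bounded} is the kind of structural input any eventual proof will likely need.
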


\noindent
We show that the `only if' parts of our conjectures are true.

\begin{thm}
\label{thm:Bob_infinite_strategy_coloring}
If\/ $w\geq 2$ and\/ $\frac{a}{b}<2$, then\/ $\valg(a,b,w)=\infty$.
\end{thm}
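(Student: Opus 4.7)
The plan is to exhibit, for each integer $n \geq 1$, a width-$2$ poset $\posP_n$ on which Bob has a strategy in the $(a,b)$-coloring game forcing at least $n+1$ colors. Together with a standard padding argument (disjoint union with $w-2$ additional chains) that extends the construction to arbitrary width $w \geq 2$ while preserving the unboundedness of the game chromatic number, this will imply $\valg(a,b,w) = \infty$.

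\textbf{Recursive construction.} The poset $\posP_n$ is built by recursion on $n$. The base case $\posP_1$ is a two-element antichain, on which Bob trivially forces $2$ colors (with only $1$ available, Alice cannot color the second point after Bob colors the first). In the inductive step, $\posP_{n+1}$ is assembled from $N$ disjoint copies $C^{(1)}, \ldots, C^{(N)}$ of $\posP_n$ together with a \emph{spine gadget} $G$, of width $2$, with $N$ very large in terms of $|\posP_n|$, $a$, and $b$. The gadget $G$ is attached to the copies so that (i) the overall poset has width $2$, and (ii) the spine forces the use of an additional color that cannot be reused inside a single copy during the subsequent inductive play.

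\textbf{Bob's strategy and the doubling argument.} Bob proceeds in two phases. In the \emph{opening phase}, Bob plays within $G$, aiming to build up a reservoir of \emph{threats}: uncolored vertices of $G$ whose later legal coloring would require a color not yet available in the current palette (because their neighbors already span all the colors in use). The key quantitative fact is that, by design of $G$, each move of Bob activates two fresh threats while each of Alice's moves neutralizes at most one. Since $a < 2b$, Bob accumulates at least $2b - a \geq 1$ net threats per round, and after a bounded number of rounds one of them must be realized, forcing Alice to introduce a new color. In the meantime, by a pigeonhole argument on $N$, Alice has played almost no moves in some copy $C^{(j^\star)}$. In the \emph{inductive phase}, Bob switches play to $C^{(j^\star)}$ and applies his inductive strategy there, forcing $n$ further colors that are distinct from those forced in the spine. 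The total is at least $n+1$.

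\textbf{Main obstacle.} The central difficulty is the simultaneous engineering of three conflicting requirements on the gadget $G$ and its attachment to the copies. First, width exactly $2$ must be preserved, which forbids any three pairwise incomparable points and imposes a rigid combinatorial skeleton on both $G$ and the incomparabilities it introduces between $G$ and the copies $C^{(j)}$. Second, the doubling property must hold robustly against Alice's adaptive responses: each move of Bob in $G$ must simultaneously create two independent threats that Alice cannot both neutralize with a single move. Third, the colors forced in the opening phase must be genuinely new relative to the colors forced in the inductive phase; the width-$2$ constraint makes insulation of copies from spine colors very delicate, since the obvious device of using many pairwise incomparable ``insulators'' is immediately blocked by the antichain bound. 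Reconciling these three constraints in a single recursive construction is, in my view, the technical heart of the argument.
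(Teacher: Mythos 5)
There is a genuine gap: the object your whole argument rests on---the spine gadget $G$ with the ``doubling'' property and the insulation property---is never constructed, and your own closing paragraph concedes that reconciling its three requirements is unresolved. Worse, the architecture you propose cannot be made to work as described. For the inductive phase to force $n$ colors \emph{distinct} from the spine color, the spine vertex carrying that color would have to be incomparable to (adjacent, in the incomparability graph, to) essentially all of the copy $C^{(j^\star)}$; but $C^{(j^\star)}\cong\posP_n$ has width $2$ for $n\geq 1$, so a spine vertex incomparable to two incomparable points of the copy already yields a $3$-element antichain. Hence no single spine vertex can insulate a whole width-$2$ copy, and the recursion ``disjoint copies of $\posP_n$ plus a spine'' does not close. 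Two smaller problems: the quantitative step ``$2b-a\geq 1$ net threats per round'' is exactly where the hypothesis $\frac{a}{b}<2$ must do its work, and without a concrete $G$ it is unverifiable; and the padding to width $w$ via ``disjoint union with $w-2$ chains'' makes those chains incomparable to everything, i.e.\ takes the \emph{join} of incomparability graphs, which changes the game substantially (and even the correct padding, stacking the extra chains comparably, needs an argument because the game chromatic number is not monotone under induced subgraphs).

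For comparison, the paper avoids both difficulties by changing the shape of the recursion. First, an auxiliary game is introduced in which Bob moves first and Alice may color anywhere from $0$ to $a$ points; a stacking lemma (many copies of a poset made pairwise \emph{comparable}, plus pigeonhole over the copies Alice ``invalidates'') reduces the $(a,b)$-game with $\frac{a}{b}<2$ to the auxiliary $(1,1)$-game on width $2$. This is where your pigeonhole-over-$N$-copies intuition lives, but done with comparable copies so the width stays $2$ and Alice's option to pass makes the reduction monotone in $w$. Second, on width $2$ the construction is two chains $C$ and $C'$ such that for every interval $I$ of $C$ containing an endpoint of $C$, there are $2k$ points of $C'$ incomparable exactly to $I$. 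Bob forces one brand-new color per round on a point of $C'$ while maintaining a nonempty uncolored interval of $C$ incomparable to everything he has colored; after $k$ rounds that interval cannot be colored. In effect, your ``copies'' are replaced by \emph{nested intervals of a single chain}, which is precisely what makes the insulation compatible with width $2$. You would need to either reproduce something like this two-chain mechanism or find a genuinely different gadget; as written, the proposal is a plan with its key component missing.
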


\begin{thm}
\label{thm:Bob_infinite_strategy_Grundy}
If\/ $w\geq 2$ and\/ $\frac{a}{b}<1$, then\/ $\grg(a,b,w)=\infty$.
\end{thm}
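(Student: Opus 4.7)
The plan is to prove the theorem by induction on $k$: for every $k\geq 1$, construct a width-$2$ poset $\posP_k$ and a strategy $\sigma_k$ for Bob in the $(a,b)$-Grundy game on $\posP_k$ that forces first-fit to use at least $k$ distinct colors against any strategy of Alice. The base case $k=1$ is a single vertex; the hypothesis $a/b<1$ gives $b\geq a+1$, so Bob has at least one surplus move per round.

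For the inductive step, $\posP_k$ is built as a $2$-dimensional poset (points in the plane with dominance order), realizing the classical recursive construction showing that the Grundy number of width-$2$ posets is unbounded. Concretely, $\posP_k$ contains many disjoint copies of $\posP_{k-1}$, laid out in the plane so that vertices of different copies are pairwise comparable, together with additional scaffolding vertices placed to be incomparable only to carefully selected chains within specific copies (never to a $2$-antichain, which would create a forbidden $3$-antichain). The scaffolding is designed so that once $\sigma_{k-1}$ has forced first-fit to assign colors $1,\dots,k-1$ along a distinguished chain $D_{k-1}$ within one copy, playing the attached scaffold yields a new $k$-th color, and a new distinguished chain $D_k$ through the scaffold is identified for use at the next level.

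Bob's strategy $\sigma_k$ picks an untouched copy $C^*$ and runs $\sigma_{k-1}$ inside it, using all $b$ of his moves per round on $C^*$. Because every vertex outside $C^*\cup\{s^*\}$ (where $s^*$ is the attached scaffold) is comparable to every vertex of $C^*$, first-fit colors inside $C^*$ depend only on plays inside $C^*\cup\{s^*\}$; hence the guarantee of $\sigma_{k-1}$ transfers verbatim to the sub-game on $C^*$. Bob then plays $s^*$ at the right moment, and first-fit assigns it the new color $k$.

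The main obstacle is Alice's ability to disrupt by prematurely playing $s^*$. Two complementary devices address this. First, a pointwise monotonicity argument for first-fit shows that if Alice plays $s^*$ with some color $c$, the subsequent first-fit coloring of $C^*$ shifts (any would-be color $\geq c$ bumps up by one), so the total number of distinct colors used in $C^*\cup\{s^*\}$ is preserved or increased, and premature play does not hurt Bob. When the monotonicity argument is not sufficient on its own, Bob maintains redundancy: by taking the number $N$ of copies polynomial in $a$, $b$, and $|\posP_{k-1}|$, each sabotaged copy costs Alice at least one move, and Bob's per-round surplus $b-a\geq 1$ together with the total length of the game ensures Bob always has a fresh copy to switch to. The technical crux is the geometric layout of $\posP_k$: placing scaffolds so that every intended incomparability holds while no $3$-antichain emerges, and running the bookkeeping to verify that $N$ can be chosen to close the induction.
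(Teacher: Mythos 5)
There is a genuine gap. The load-bearing step of your argument is the ``pointwise monotonicity'' claim: that if Alice plays the scaffold vertex $s^*$ prematurely with color $c$, the first-fit coloring of the active copy merely shifts colors $\geq c$ up by one, so the number of distinct colors in $C^*\cup\{s^*\}$ cannot decrease. This is asserted, not proved, and it is not a property first-fit enjoys in general: forbidding color $c$ on the chain $D$ incomparable to $s^*$ only affects those vertices of $D$ whose mex would have been exactly $c$, those vertices may jump to any value $\geq c+1$ rather than $c+1$, and the change then cascades to vertices \emph{not} adjacent to $s^*$ in ways that can derail the forcing gadgets entirely (first-fit lower-bound constructions depend on vertices receiving \emph{exact} colors, not just large ones). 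Your fallback, the redundancy count, does not close without monotonicity: Alice has $a\geq 1$ moves in every round for the entire duration of the game, which lasts on the order of $|\posP_k|/(a+b)$ rounds, so she can afford to spend one move on the scaffold of every copy long before Bob finishes even one; ``each sabotage costs Alice at least one move'' is therefore far too cheap a price. You would need a genuinely finer accounting of when a copy is invalidated, and you would also need to handle Alice's moves \emph{inside} the active copy at every level of the recursion, which the induction hypothesis only covers if the sub-game on $C^*$ is again an $(a,b)$-game --- a point you gesture at but do not verify.

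The paper's proof avoids all of this with a two-step reduction that your proposal misses. A general lemma (Lemma~\ref{lem:auxiliary_coloring_game}) shows $\grg(a,b,w)\geq\gra(\lfloor\frac{a}{b}\rfloor,1,w)$ by having Bob play on many pairwise-comparable copies of a fixed poset $\posQ$; since the copies are pairwise comparable, first-fit on each copy is independent of the others, and a counting argument over the copies is done \emph{once}, at the top level. The decisive observation is then that $\frac{a}{b}<1$ gives $\lfloor\frac{a}{b}\rfloor=0$: in the auxiliary $(0,1)$-Grundy game Alice makes no moves at all, so $\gra(0,1,\posQ)=\Gamma(\posQ)$ and Bob needs only the \emph{static} unboundedness of the Grundy number on width-$2$ posets, which is Kierstead's Theorem~\ref{thm:First-Fit-infinity}. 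In other words, no game-robust version of the Kierstead construction is required, which is precisely the hard part your proposal attempts and does not deliver.
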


\noindent
We also show that if $\frac{a}{b}\geq 2$, then there are posets on which any winning strategy of Alice in the $(a,b)$-coloring game uses exponentially many colors with respect to the width.

\begin{thm}
\label{thm:exponential_lower_bound}
If\/ $w\geq 2$ and\/ $\frac{a}{b}\geq 2$, then\/ $\valg(a,b,w)\geq(1+1/\lfloor\frac{a}{2b}\rfloor)^{w-1}$.
\end{thm}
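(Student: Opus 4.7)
The approach is induction on $w$. Let $k = \lfloor a/(2b) \rfloor$; the hypothesis $\frac{a}{b} \geq 2$ ensures $k \geq 1$. Set $N_w = (1+1/k)^{w-1}$; the goal is to construct, for every $w \geq 1$, a poset $\posP_w$ of width $w$ on which Bob has a strategy in the $(a,b)$-coloring game forcing at least $\lceil N_w \rceil$ colors. The base case $w = 1$ is trivial, and for $w = 2$ any two-element antichain suffices since $N_2 = 1 + 1/k \leq 2$.

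For the inductive step I would combine $k+1$ order-isomorphic copies $\posP^{(0)}, \ldots, \posP^{(k)}$ of $\posP_{w-1}$ into a single poset $\posP_w$ by adding cross-copy order relations engineered to satisfy two competing requirements. First, the width of $\posP_w$ must be exactly $w$ rather than the naive $(k+1)(w-1)$; this forces the copies to be interleaved rather than paralleled---for instance, by threading a single auxiliary chain through all copies so that the extra width contributed by the copies beyond the first is absorbed. Second, despite the small width, chains of $\posP_w$ should be prevented from freely spanning many copies, so that a legal coloring must use approximately $N_{w-1}$ distinct colors on each copy and these sets must be forced to be essentially disjoint---in total at least $(1+1/k)N_{w-1} = N_w$ colors.

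Bob's strategy would maintain the inductive strategy $\sigma_{w-1}$ as an internal simulation inside each copy. In every round Bob commits all $b$ of his moves to a single copy, chosen according to a scheduling rule such as ``attack the copy currently most under-attended by Alice''. The counting reason this works is a pigeonhole argument: defusing Bob's $b$ moves inside a copy so that $\sigma_{w-1}$ cannot complete its inductive attack there requires Alice to invest roughly $2b$ of her moves in the \emph{same} copy, so her $a \geq 2bk$ moves per round can neutralize at most $k$ of the $k+1$ copies. Some copy is always under-protected, $\sigma_{w-1}$ eventually succeeds inside it and forces $N_{w-1}$ colors there, and the cross-copy constraint from requirement~(ii) above inflates this to $N_w$ distinct colors on $\posP_w$.

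\textbf{Main obstacle.} The heart of the argument is the combinatorial construction of $\posP_w$: engineering a cross-copy ordering that simultaneously keeps the global width equal to $w$ and segregates the color sets across $k+1$ copies. This is a tight structural constraint and is exactly where the gain of the multiplicative factor $(1+1/k)$ between levels of the induction must come from. A secondary subtlety is making the amortized claim ``Alice needs $2b$ moves inside a copy to neutralize $b$ of Bob's moves there'' fully rigorous as an invariant of Bob's scheduling rule, so that one can conclude with quantitative exactness that at least one copy surrenders $N_{w-1}$ new colors to $\sigma_{w-1}$ by the end of the game.
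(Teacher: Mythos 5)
Your proposal is a plan rather than a proof, and the two pieces you yourself flag as the ``main obstacle'' are precisely the pieces that carry all the difficulty, so as it stands there is a genuine gap. First, the recursive construction of $\posP_w$ from $k+1$ copies of $\posP_{w-1}$ is not given, and the tension you describe is real: stacking the copies comparably keeps the width at $w-1$ but then a single chain (i.e.\ a single color class) can traverse all copies, while leaving them incomparable makes the width $(k+1)(w-1)$. Second, your accounting does not cohere: if each of the $k+1$ copies were forced to use $N_{w-1}$ colors and these color sets were ``essentially disjoint,'' the total would be $(k+1)N_{w-1}$, not $(1+1/k)N_{w-1}$; to gain only the factor $1+1/k$ you would need something like ``each color is used in at most $k$ of the copies,'' which is a different and unsubstantiated requirement. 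Third, the amortized claim that Alice must spend $2b$ moves inside a copy to neutralize $b$ of Bob's moves there is asserted, not proved, and depends entirely on the missing construction.

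The paper proceeds differently on all three counts. It first reduces the $(a,b)$-game to an auxiliary game in which Bob moves first and colors one point per round while Alice colors between $0$ and $\lfloor\frac{a}{b}\rfloor$ points (Lemma \ref{lem:auxiliary_coloring_game}, proved by stacking many mutually comparable copies of a fixed poset so that Alice cannot defend all of them); this disposes of the asymmetry once and for all, with no scheduling invariant needed. The width-$w$ construction (Lemma \ref{lem:auxiliary_game_exponential_lower_bound}) is then not recursive: it is one long chain $C$ together with $w-1$ auxiliary chains $C_1,\dots,C_{w-1}$ whose points are incomparable to intervals of $C$ of widely separated sizes $n_1\gg\cdots\gg n_{w-1}$. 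Bob maintains a shrinking uncolored interval $I$ of $C$ and a set $A$ of colored points all incomparable to every point of $I$, and in phase $i$ he uses $C_i$ to force at least a $1/(\lfloor\frac{a}{2}\rfloor+1)$ fraction of the colors still unused on $A$ to appear on $A\cap C_i$; the set of unused colors thus shrinks by the factor $\lfloor\frac{a}{2}\rfloor/(\lfloor\frac{a}{2}\rfloor+1)$ per phase, which is exactly where the bound $(1+1/\lfloor\frac{a}{2}\rfloor)^{w-1}$ comes from. This multiplicative depletion of the \emph{unused} colors along a single interval is the mechanism that your picture of disjoint color sets on separate copies is missing.
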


\noindent
In particular, $\valg(2,1,w)\geq 2^{w-1}$.
We provide an almost matching upper bound for this case.

\begin{thm}
\label{thm:upper_bound}
$\valg(2,1,w)\leq w2^{w-1}$.
\end{thm}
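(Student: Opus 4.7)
The plan is to exhibit an explicit winning strategy for Alice in the $(2,1)$-coloring game on the incomparability graph of any poset $\posP$ of width $w$, using the palette $[w]\times\{0,1\}^{w-1}$ of size $w\cdot 2^{w-1}$. I think of the first coordinate of a color as a \emph{chain index} and of the second coordinate as a \emph{pattern} that records how the colored point sits relative to the other $w-1$ chains at the moment of its coloring.

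At every stage of the game, Alice maintains a partition of the already-colored points into at most $w$ chains $L_1,\ldots,L_w$, which exists by Dilworth's theorem applied to the colored subposet (whose width is at most $w$). Her invariant is that every point whose color has first coordinate $i$ lies in $L_i$, and that for each pair $(i,s)$ the set of points colored $(i,s)$ forms a chain in $\posP$; the second condition is equivalent to the coloring being proper, since two points are adjacent in the incomparability graph precisely when they are incomparable in $\posP$. The partition $L_1,\ldots,L_w$ may be revised after each move, but no already-colored point is ever migrated to a different chain.

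The heart of the argument is Alice's reaction to Bob's move. When Bob colors a point $b$ with a color $(i,s)$, Alice interprets his move as committing $b$ to the chain $L_i$; her own two moves are then spent playing points that shield the future game from being forced into an unresolvable configuration. Intuitively, Bob can compromise at most one chain with his one move, and Alice uses two moves to neutralize that compromise while also reinforcing a second chain that might have been destabilized as a side effect. The set $\{0,1\}^{w-1}$ of patterns is the finite memory Alice needs in order to decide which color is safe on a given point, and $2^{w-1}$ is the number of distinct such patterns.

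The main obstacle will be proving that Alice's invariant can always be maintained, and in particular that every uncolored point always has at least one legal color, i.e.\ at least one pair $(i,s)$ for which no incomparable point has been colored $(i,s)$. I expect to organize the analysis by induction on $w$ via the recursion $f(w)=2f(w-1)+2^{w-1}$, which solves to $w\cdot 2^{w-1}$. The inductive step should effectively run two interleaved copies of the width-$(w-1)$ strategy on the subposet obtained by removing a carefully chosen chain $L_w$, together with a fresh palette of $2^{w-1}$ colors devoted to $L_w$ and to its interactions with the other chains; Alice's extra move per round is precisely what allows her to defer the choice of which copy of the width-$(w-1)$ strategy to apply until after she has seen Bob's move, and this deferred choice is exactly the information encoded in the pattern coordinate of her colors.
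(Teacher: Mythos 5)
There is a genuine gap: the invariant you propose cannot be maintained, and the idea that would be needed to repair it is exactly the one missing from your sketch. You require that every point whose color has first coordinate $i$ lie in the chain $L_i$; since $L_i$ is a chain, this forces all points whose colors share the first coordinate $i$ to be pairwise comparable. But Bob is free to use any color that keeps the coloring proper: he can color a point $x$ with $(i,s)$ and later color a point $y$ incomparable to $x$ with $(i,s')$ for some $s'\neq s$. This is a legal move, yet $x$ and $y$ cannot both lie in $L_i$, so the invariant breaks and Alice has no way to forbid the move. (Note also that if the invariant did hold, the pattern coordinate would be redundant and $w$ colors would suffice, which contradicts Theorem \ref{thm:exponential_lower_bound}.) The same difficulty undermines the claim that a chain partition of the colored points into at most $w$ chains can be extended online without ever migrating a point: the arriving points are chosen partly by Bob, and irrevocable online chain partitioning of a width-$w$ order into $w$ chains is impossible in general.

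The content you would need, and which your sketch does not supply, is a mechanism for the case where Bob places a color ``belonging to'' chain $i$ on a point outside that chain. The paper fixes a Dilworth decomposition $C_1,\ldots,C_w$ of the \emph{whole} poset in advance, reserves a block $\Gamma_i$ of $2^{w-1}$ colors for $C_i$, and when Bob uses $\gamma\in\Gamma_i$ on a point $x\notin C_i$ it records that $\gamma$ is now \emph{forbidden} on the interval of $C_i$ consisting of points incomparable to $x$. All of the work then happens inside a single-chain auxiliary game in which an adversary presents intervals with forbidden colors and Alice may answer each such interval by coloring up to two of its points; the bound of $2^{w-1}$ colors per chain comes from an induction on the nesting depth of these intervals (which is less than $w$ because the poset has width $w$), doubling the palette at each level. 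Your recursion $f(w)=2f(w-1)+2^{w-1}$ is arithmetically consistent with $w2^{w-1}$, but the proposed inductive step --- ``two interleaved copies of the width-$(w-1)$ strategy plus a fresh palette for $L_w$'' --- is not backed by any concrete rule for which copy handles which move or why Bob cannot play across the two copies, so as written the proposal is a plan with the right color count rather than a proof.
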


Finally, we address a question raised by Havet and Zhu \cite[Problem 4]{HZ13} whether there is a function $f$ such that all graphs $G$ satisfy $\valg(G)\leq f(\grg(G))$.
The following and the fact that $\grg(G)\leq\Gamma(G)$ imply that the answer is no.

\begin{thm}
\label{thm:Havet_Zhu}
For every\/ $k$, there is a poset\/ $\posP$ such that\/ $\valg(1,1,\posP)>k$ and\/ $\Gamma(\posP)\leq c$, where\/ $c$ is an absolute constant.
\end{thm}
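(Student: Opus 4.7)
The plan is to build, for each $k$, a poset $\posP_k$ such that $\valg(1,1,\posP_k) > k$ while the incomparability graph of $\posP_k$ has Grundy number at most some absolute constant $c$. By Theorem~\ref{thm:Bob_infinite_strategy_coloring} we already know that such lower bounds on the game chromatic number are achievable with width-$2$ posets; the real challenge is to choose specific witnesses whose Grundy number remains uniformly bounded as $k\to\infty$.

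The natural approach is a recursive construction. I would define a gadget $\mathcal{H}_k$ inductively, so that Bob's response on $\mathcal{H}_k$ effectively reduces the play to an instance of $\mathcal{H}_{k-1}$ in which Alice has one fewer color available. The full poset $\posP_k$ can be taken as $\mathcal{H}_k$ (possibly preceded by a parallel composition of a bounded number of copies to cover all of Alice's possible first moves). By induction on $k$, Bob forces Alice to use more than $k$ colors. If one of the posets from the proof of Theorem~\ref{thm:Bob_infinite_strategy_coloring} already admits such a clean recursive description, the game lower bound is essentially inherited from there.

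For the Grundy bound, I would analyze first-fit chain decompositions of arbitrary induced subposets of $\posP_k$. The key claim is that any such decomposition uses at most $c$ chains, independent of processing order and of $k$. Equivalently, no induced subgraph $H$ of the incomparability graph admits a partition into independent sets (chains) $V_1,\dots,V_{c+1}$ such that each $v\in V_j$ has a neighbor in every $V_i$ with $i<j$. To establish this, one exploits the locally sparse nature of the gadgets: although the recursion introduces new points at every level, the cross-level incomparabilities are structured so that any ``Grundy tower'' must be confined to a bounded number of adjacent levels, giving an absolute constant bound on $\Gamma(\posP_k)$.

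The main obstacle is designing the gadgets to balance both requirements simultaneously. Bob's winning strategy needs enough incomparability at each level to trap Alice's color commitments, while the Grundy bound requires that no adversarial processing order can turn these incomparabilities into a long first-fit chain. The underlying asymmetry to exploit is that Bob reacts to Alice's moves and thus leverages the \emph{temporal} order of play, whereas first-fit is a static procedure that sees only the graph together with a fixed vertex order. This asymmetry is what makes the separation possible in principle; realizing it concretely by a gadget that is simultaneously ``game-rich'' and ``first-fit-poor'' is the crux of the proof.
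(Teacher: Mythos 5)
Your proposal is a plan rather than a proof: both of its essential claims---that a gadget can be built on which Bob forces more than $k$ colors, and that every first-fit coloring of that gadget uses at most $c$ colors---are left as intentions, and you say yourself that realizing them concretely is ``the crux.'' The first half is recoverable: the paper indeed reuses the witnesses behind Theorem~\ref{thm:Bob_infinite_strategy_coloring}, namely the two-chain poset $\posQ$ of Lemma~\ref{lem:1_1_coloring_lower_bound} amplified by composing many copies in series as in Lemma~\ref{lem:auxiliary_coloring_game}. No new recursive gadget $\mathcal{H}_k$ is needed, and Bob's strategy is not an induction on $k$ peeling off one color per level, but the single interval-shrinking argument of Lemma~\ref{lem:1_1_coloring_lower_bound}: he keeps coloring points of $C'$ with fresh colors while maintaining a long uncolored interval of $C$ incomparable to all of them.

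The genuine gap is the Grundy bound. A direct analysis of ``Grundy towers'' in an incomparability graph of width $2$ is precisely the hard part: Theorem~\ref{thm:First-Fit-infinity} shows that width $2$ alone imposes no bound on $\Gamma$, so ``locally sparse'' and ``confined to a bounded number of levels'' are not properties you can assert without identifying a concrete structural reason---and your gadget is not even specified. The paper's key move, which your proposal lacks entirely, is to avoid analyzing first-fit directly and instead invoke Theorem~\ref{thm:First-Fit-bounded} (Bosek et al.): it suffices to exhibit one fixed width-$2$ poset that $\posP$ excludes as an induced subposet, and the paper checks that the ten-element poset $(R,\leq_R)$ with $r_i\leq_R r_j$ if and only if $i+1<j$ does not embed into $\posP$, whence $\Gamma(\posP)\leq c$ for an absolute constant $c$. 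Without this reduction, or a self-contained substitute for it (which would be a substantial theorem in its own right), your third paragraph carries no argument, and the ``temporal versus static'' asymmetry you describe in the fourth, while a fair intuition for why the separation is plausible, does not advance the proof.
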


Theorem \ref{thm:upper_bound} is a strong premise that Conjecture \ref{conj:chromatic_game} is true.
Conjecture \ref{conj:Grundy_game} is more mysterious; in particular, we can only prove $\grg(1,1,w)<\infty$ for $w\leq 3$.
The following two results illustrate the nature of the problem; they are also used in the proofs of Theorems \ref{thm:Bob_infinite_strategy_Grundy} and \ref{thm:Havet_Zhu}, respectively.

\begin{thm}[Kierstead \cite{Kie81}] 
\label{thm:First-Fit-infinity}
For every\/ $k$, there is a poset\/ $\posP$ of width\/ $2$ such that\/ $\Gamma(\posP)\geq k$. 
\end{thm}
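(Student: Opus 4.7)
The plan is to construct inductively, for each $k$, a width-$2$ poset $\posP_k$ together with a linear order $\sigma_k$ on its elements so that the first-fit coloring of the incomparability graph of $\posP_k$ in the order $\sigma_k$ uses at least $k$ colors. Since the color classes of any proper coloring of an incomparability graph are chains, this is equivalent to forcing First-Fit, viewed as an on-line chain-partition algorithm, to open a $k$-th chain on some element of $\posP_k$. It is therefore natural to phrase the construction as an adversary strategy in an on-line chain-partition game played against First-Fit on posets of width $2$.

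The key structural fact I would exploit is that in a width-$2$ poset the set of elements incomparable to any fixed element $x$ is itself a chain, since otherwise two such elements together with $x$ would form a $3$-antichain. Hence, to force an element $v$ to open the $k$-th chain, $v$ must be incomparable to one representative from each of the $k-1$ chains already opened, and these $k-1$ representatives must themselves lie on a common chain of $\posP$. I would design $\posP_k$ so that at some stage of the presentation a staircase of $k-1$ mutually comparable ``blockers'', one from each First-Fit chain opened so far, is available; then a single newly inserted element, chosen to be incomparable to precisely those blockers and comparable to everything else, forces First-Fit into the $k$-th chain.

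To produce such a staircase I would build $\posP_k$ by nesting two appropriately scaled copies of the $\posP_{k-1}$-construction, one placed ``low'' in a chain $A$ and one placed ``high'' in a chain $B$, with cross-comparabilities arranged so that First-Fit first replicates the $\posP_{k-1}$-coloring on the $A$-side and then on the $B$-side, thereby accumulating the required blockers on a common chain before $v$ is presented. The main obstacle, and the delicate part of the argument, is the tension between forcing a new chain and preserving width~$2$: each blocker, together with $v$ and earlier elements of the opposite side, must avoid creating a $3$-antichain. I would handle this by using the two-chain decomposition of $\posP_k$ as a coordinate system and tuning the cross-comparabilities so that the incomparability pattern between $A$ and $B$ looks like an interval-order pattern, which is easy to verify has no $3$-antichain. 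Once the construction is in place, both width~$2$ and the claim that First-Fit opens the $k$-th chain on $v$ follow by direct induction on $k$.
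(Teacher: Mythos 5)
The paper does not actually prove Theorem~\ref{thm:First-Fit-infinity}; it is quoted from Kierstead~\cite{Kie81}, so your argument has to stand entirely on its own. Your opening analysis is correct and is indeed the heart of any such proof: in a width-$2$ poset the set of points incomparable to $v$ is a chain, so to force color $k$ on $v$ you must first manufacture $k-1$ pairwise comparable points carrying colors $1,\dots,k-1$. The genuine gap is in the inductive step that is supposed to manufacture this staircase. A single run of the $\posP_{k-1}$-construction does not hand it to you: the point that receives color $k-1$ does so precisely because it is \emph{incomparable} to the witnesses of colors $1,\dots,k-2$, so the distinctly colored points produced by one copy are spread over both chains and are not mutually comparable. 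You assert that nesting one copy ``low in $A$'' and one ``high in $B$'' fixes this, but (i) each copy is itself a union of two chains and cannot sit inside a single chain, (ii) you never say how the outputs of the two copies combine into one chain bearing all $k-1$ colors, and (iii) you do not address why First-Fit, when run on the second copy, still produces colors $1,\dots,k-1$ rather than being perturbed by incomparabilities with already-colored points of the first copy. A doubling recursion of this shape can be made to work, but only with a strengthened induction hypothesis along the lines of: the gadget $T_j$ forces a chain $D_j$ lying entirely inside \emph{one} of the two chains of the decomposition and containing one point of each color $1,\dots,j$, where the second copy of $T_{j-1}$ is made comparable to everything presented earlier (so First-Fit treats it as a fresh instance) and $D_j$ is that copy's forced chain together with the point that has just received color $j$. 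None of this invariant appears in your sketch, and it is exactly where the work lies.

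Separately, the difficulty you flag as ``the delicate part'' is not the real one, and your proposed remedy points in a dangerous direction. A union of two chains has width at most $2$ automatically, since an antichain meets each chain at most once; the actual constraint is transitivity, i.e.\ that the declared cross-relations form a partial order, which for two chains amounts to each point's incomparability set in the opposite chain being an interval of that chain, with suitable monotonicity. More seriously, if the construction really were an interval order it would fail outright: First-Fit uses boundedly many chains on interval orders of bounded width, and Theorem~\ref{thm:First-Fit-bounded} of this paper shows that any family witnessing Theorem~\ref{thm:First-Fit-infinity} must contain every fixed width-$2$ poset as an induced subposet---in particular arbitrarily long fences such as the poset $(R,\leq_R)$ from the proof of Theorem~\ref{thm:Havet_Zhu}, which interval orders exclude. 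So ``tune the incomparabilities to look like an interval order'' cannot be the organizing principle of the construction; the long fences are unavoidable and your verification of well-definedness has to accommodate them.
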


\begin{thm}[Bosek at al.\ \cite{BKM13}] 
\label{thm:First-Fit-bounded}
For every poset\/ $\posQ$ of width\/ $2$, there is a function\/ $f_\posQ$ such that every poset\/ $\posP$ of width\/ $w$ that does not contain\/ $\posQ$ as an induced subposet satisfies\/ $\Gamma(\posP)\leq f_\posQ(w)$.
\end{thm}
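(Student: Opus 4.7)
The plan is to prove the contrapositive: I choose $f_\posQ(w)$ so large that any poset $\posP$ of width $w$ with $\Gamma(\posP)\geq f_\posQ(w)$ must contain $\posQ$ as an induced subposet.

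Fix a first-fit coloring of the incomparability graph of $\posP$ using $N=\Gamma(\posP)$ colors, together with a linear order of vertices witnessing it. The first-fit rule ensures that for every vertex $v$ of color $c$ and every $c'<c$, there is an earlier vertex of color $c'$ incomparable with $v$ in $\posP$. Starting from a vertex of the top color and iterating this witness relation, I build a deep witness tree whose vertices are labeled by colors and whose parent-child pairs are incomparable in $\posP$. The width bound $w$ prevents any branch from being a long antichain, so the huge number of vertices in the tree must translate into a rich pattern of \emph{comparabilities} between different branches.

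Write $\posQ$ as the union of two chains $A$ and $B$ with a prescribed $|A|\times|B|$ comparability pattern. The aim is to extract from the witness tree two chains $X=x_1<\dots<x_M$ and $Y=y_1<\dots<y_M$ in $\posP$, with $M=M(|\posQ|,w)$ large, exhibiting the pattern of $\posQ$ as an induced sub-configuration. To achieve this, I apply Ramsey-type reductions on the comparability types of candidate pairs along the witness structure, passing to monochromatic subfamilies step by step and using the width bound $w$ to keep the number of types finite. Choosing $M$ large enough in terms of $|\posQ|$ and $w$, the resulting $X$ and $Y$ contain every $|A|\times|B|$ comparability pattern realizable between two chains of a width-$w$ poset as an induced sub-configuration, in particular the one prescribed by $\posQ$.

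The main obstacle is to guarantee the \emph{induced} condition, since $\posQ$ prescribes both comparabilities and incomparabilities between $A$ and $B$. The witness structure supplies incomparabilities essentially for free, but the required comparabilities have to be produced by the Ramsey step from an a priori large pool of types; the width bound $w$ is exactly what bounds the number of such types, and this is where the width hypothesis enters crucially. The delicate part is calibrating the depth of the witness tree against the Ramsey parameters so as to obtain a single explicit (albeit enormous) function $f_\posQ(w)$.
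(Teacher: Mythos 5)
This statement is not proved in the paper at all: it is quoted as a known result of Bosek et al.\ \cite{BKM13}, so there is no in-paper argument to compare against, and the burden on your sketch is to actually constitute a proof. It does not. The decisive gap is in the sentence claiming that, after the Ramsey-type reductions, ``the resulting $X$ and $Y$ contain every $|A|\times|B|$ comparability pattern realizable between two chains of a width-$w$ poset.'' Ramsey arguments deliver \emph{homogeneity}, not \emph{universality}: passing to monochromatic subfamilies leaves you with two chains whose cross-relations follow one fixed pattern (determined by $\posP$ and by which color class survived), and that single pattern need not contain the specific pattern of $\posQ$ as an induced sub-configuration. Concretely, if the surviving homogeneous type is, say, ``$x_i$ and $y_j$ incomparable iff $j\geq i$, else $y_j<x_i$,'' then the poset $(R,\leq_R)$ used elsewhere in this paper (with $r_i\leq_R r_j$ iff $i+1<j$) does not embed into $X\cup Y$: the forced comparability $r_1<r_4$ would have to go in the direction $y<x$, which is incompatible with the incomparabilities $r_2\parallel r_3$ and $r_3\parallel r_4$ already placed. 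So the contrapositive you aim for --- large $\Gamma(\posP)$ forces an induced copy of the \emph{given} $\posQ$ --- is exactly the hard content of the theorem, and your outline replaces it with an unjustified universality claim.

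A secondary symptom of the same problem: your sketch never isolates where the hypothesis that $\posQ$ has width $2$ is essential. It must be, because the statement fails for width-$3$ posets $\posQ$: every poset of width $2$ excludes the $3$-element antichain as an induced subposet, yet by Theorem \ref{thm:First-Fit-infinity} such posets already have unbounded Grundy number. Any correct proof has to exploit the two-chain structure of $\posQ$ in a way that breaks for three chains; ``write $\posQ$ as the union of two chains and run Ramsey'' does not surface such a mechanism. The witness-tree setup in your first step (first-fit witnesses of smaller colors, bounded antichains along branches) is a reasonable starting point and does appear in arguments of this kind, but the actual proof in \cite{BKM13} is a substantially more delicate structural analysis than a depth-versus-Ramsey calibration, and the missing middle of your argument cannot be patched by choosing $f_\posQ(w)$ larger.
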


\section{Lower bounds}
\label{sec:lower_bound}

We define the \emph{auxiliary\/ $(a,b)$-coloring game} and the \emph{auxiliary\/ $(a,b)$-Grundy game} like the $(a,b)$-coloring game and the $(a,b)$-Grundy game, respectively, except that
\begin{itemize}
\item Bob starts the game and colors (chooses) $b$ vertices in each round,
\item Alice responds to Bob's moves by coloring (choosing) any number of vertices between $0$ and~$a$.
\end{itemize}
For the auxiliary games, we define parameters $\vala(a,b,\posP)$, $\gra(a,b,\posP)$, $\vala(a,b,w)$, and $\gra(a,b,w)$ analogously to the respective parameters of the ordinary games.
When $x$ is a point of a poset $\posP$, we let $I_\posP(x)$ denote the set of points of $\posP$ that are incomparable to $x$.

\begin{lem}
\label{lem:auxiliary_coloring_game}
For every poset\/ $\posQ$, if\/ $\posP$ is the poset that consists of\/ $n$ copies of\/ $\posQ$ made comparable to each other and\/ $n$ is large enough in terms of\/ $a$, $b$ and\/ $\posQ$, then\/ $\valg(a,b,\posP)\geq\vala(\lfloor\frac{a}{b}\rfloor,1,\posQ)$.
Consequently, $\valg(a,b,w)\geq\vala(\lfloor\frac{a}{b}\rfloor,1,w)$.
Similarly, $\grg(a,b,w)\geq\gra(\lfloor\frac{a}{b}\rfloor,1,w)$.\footnote{We are grateful to Grzegorz Matecki for his observation that the lemma can be applied to the Grundy game.}
\end{lem}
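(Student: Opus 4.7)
My plan is to derive Bob's strategy for the main $(a,b)$-coloring game on $\posP$ by running parallel simulations of his winning strategy $\mathcal{S}$ in the auxiliary $(\lfloor a/b\rfloor,1)$-coloring game on $\posQ$. The starting observation is that since the $n$ copies of $\posQ$ making up $\posP$ are pairwise comparable, the incomparability graph of $\posP$ is the disjoint union of $n$ incomparability graphs of $\posQ$. A proper chain partition of $\posP$ therefore restricts to a proper chain partition of each copy with the same color set, so it suffices for Bob to force a single copy to require $k:=\vala(\lfloor a/b\rfloor,1,\posQ)$ colors to ensure that Alice cannot win the main game with fewer colors.

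For the strategy itself, Bob maintains a pool of active copies on which he is running simulations of $\mathcal{S}$. In each main round, after Alice plays her $a$ moves, Bob identifies the active copies in which the pacing of $\mathcal{S}$ is still respected---meaning that since Bob's previous move in $C$, Alice has played at most $\lfloor a/b\rfloor$ moves in $C$ (with the analogous bound before Bob's first move in $C$)---and plays, in each of $b$ such copies, the move prescribed by the simulation of $\mathcal{S}$. Copies where Alice has broken the pacing are discarded and replaced by fresh untouched copies drawn from the overall pool of $n$.

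The central verification is that some individual simulation completes and forces $k$ colors in its copy. The counting argument behind this is: disrupting a copy's simulation requires Alice to play more than $\lfloor a/b\rfloor$ of her moves in it between two of Bob's visits, and because $\lfloor a/b\rfloor+1>a/b$, amortized over the game Alice can disrupt strictly fewer than $b$ copies per round. Bob's $b$ advances per round thus outpace Alice's disruptions, and by choosing $n$ large enough relative to $a$, $b$, and $|\posQ|$, Bob always has a surplus of fresh copies and enough remaining rounds for some simulation to finish. The main obstacle I anticipate is that the surplus must be concentrated on a single simulation rather than spread thinly---Alice could try to keep every simulation perpetually one move short of completion---which I would handle by instructing Bob to prioritise advancing his most-progressed valid simulation and by a careful potential argument showing that the maximum progress grows without bound in the number of rounds.

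The consequence $\valg(a,b,w)\geq\vala(\lfloor a/b\rfloor,1,w)$ then follows by applying the main inequality to a $\posQ$ of width $w$ realising $\vala(\lfloor a/b\rfloor,1,w)$, since $\posP$ so constructed also has width $w$. The Grundy analogue noted in the footnote is, as Matecki observed, the same argument verbatim: a first-fit coloring is a deterministic function of the chosen ordering, so Bob's parallel-simulation strategy transports move-by-move from his auxiliary Grundy-game strategy, and the pacing-based counting argument is unaffected.
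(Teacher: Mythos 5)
Your proposal follows the same route as the paper's (two‑sentence) proof idea: Bob runs his auxiliary‑game strategy in parallel on many copies, a copy is invalidated only when Alice spends at least $\lfloor\frac{a}{b}\rfloor+1$ of her moves in it within one of Bob's gaps, and since $\lfloor\frac{a}{b}\rfloor+1>\frac{a}{b}$ she can invalidate fewer than $b$ copies per round amortized while Bob advances $b$. The reduction of the main game on $\posP$ to the copies (disjointness of the incomparability graphs, a stuck vertex in one copy staying stuck), the width consequence, and the verbatim transfer to the Grundy game are all fine.

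The one place where you commit to a concrete mechanism --- Bob greedily advancing his most-progressed valid simulations --- does not obviously resolve the concentration problem you yourself flag, and I would not wave it through. Take $a=3$, $b=2$, so $\lfloor\frac{a}{b}\rfloor=1$ and a kill costs Alice $2$ moves. Each round Bob advances his top two valid copies by one step; Alice spends $2$ of her $3$ moves killing the single most-progressed copy immediately after Bob touches it and $1$ move elsewhere. The result is a conveyor belt in which the maximum progress of any valid simulation is pinned at $2$ forever, even though Alice's amortized kill rate ($1$ per round) is below $b=2$: the amortized count bounds the \emph{number} of kills, not the \emph{progress} destroyed, and one cheap kill can erase many of Bob's invested moves. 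So ``maximum progress grows without bound'' is exactly the claim that needs a nontrivial argument, and greedy prioritization plus a naive potential does not supply it. The standard repair is hierarchical: for each progress level $j$, Bob first accumulates a pool of simultaneously valid copies at level $j$ large enough (as a function of $a$, $b$, and the length of his auxiliary strategy) that Alice cannot deplete it before he promotes $b$ of its members to level $j+1$ at the start of a single turn; the required pool sizes and the number of fresh copies consumed are then bounded by a recursion, which is where ``$n$ large enough in terms of $a$, $b$ and $\posQ$'' is actually used. Since the paper itself gives only the proof idea, I cannot check your bookkeeping against theirs, but as written your sketch leaves the same gap open and the specific patch you propose for it is not sound.
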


\begin{proof}[Proof idea]
Bob plays his winning strategy of the game on $\posQ$ in parallel on many copies of $\posQ$.
In each round, Alice can `invalidate' some of the copies by coloring more than $\lfloor\frac{a}{b}\rfloor$ vertices in them, but Bob is able to carry on his strategy up to a winning position in at least one copy.
\end{proof}

\begin{lem}
\label{lem:1_1_coloring_lower_bound}
Let\/ $\posP$ be a poset that consists of two chains\/ $C$ and\/ $C'$ such that for every interval\/ $I$ of\/ $C$ containing the minimum or the maximum point of\/ $C$, the chain\/ $C'$ contains\/ $2k$ points\/ $x$ with\/ $I_\posP(x)\cap C=I$.
If\/ $|C|\gg k$, then\/ $\vala(1,1,\posP)>k$.
Consequently, $\vala(1,1,2)=\infty$.
\end{lem}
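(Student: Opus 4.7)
The plan is to give Bob an explicit winning strategy in the auxiliary $(1,1)$-coloring game on $\posP$ when only $k$ colors are available, forcing the existence of a point $c^*\in C$ that remains uncolored and has all $k$ colors appearing on its $C'$-neighborhood, so that $c^*$ has no legal color and Bob wins. Bob achieves this by playing $k$ points $d_1,\dots,d_k$ of $C'$ with distinct colors $1,\dots,k$ whose incomparability intervals $I_1,\dots,I_k$ in $C$ are prefixes or suffixes of $C$ that share a common uncolored point. Since $C'$ is a chain, its points are pairwise comparable, so Bob can assign them arbitrary colors; the only legality constraint on his move $d_j$ with color $j$ is that no $c$-point of $I_j$ is already colored $j$.

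Throughout, Bob maintains a candidate set $T\subseteq C$ of points that are still uncolored and belong to every $I_1,\dots,I_{j-1}$ played so far; initially $T=C$. In round $j$, Bob chooses $I_j$ among the prefixes and suffixes of $C$ that avoid every $c\in C$ Alice has colored with color $j$, taking the option that retains the larger portion of $T$. A rough halving analysis then gives $|T|\geq |C|/2^k - O(k)$ after $k$ rounds, which is positive whenever $|C|\gg k$. The $2k$ copies per prefix/suffix type in the hypothesis guarantee that Bob always has an unplayed $C'$-point of the desired type at hand, because Alice's at most $k$ moves can consume at most $k$ copies of any fixed type. At the end, any $c^*\in T$ is uncolored and incomparable to each $d_j$, so its neighborhood contains all $k$ colors and it cannot be legally colored.

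For the consequence $\vala(1,1,2)=\infty$, one constructs, for each $k$, a width-$2$ poset satisfying the hypothesis: take $C$ to be a chain of length sufficiently large in terms of $k$, and form $C'$ as a chain containing, for every prefix and every suffix $I$ of $C$, exactly $2k$ points $x$ with $I_\posP(x)\cap C=I$, placed comparably to each other and to $C$ exactly as required. Since $\vala(1,1,\posP)>k$ for each such $\posP$, the parameter is unbounded on width-$2$ posets.

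The main obstacle will be to make the halving analysis robust against Alice's best adversarial play. Alice's most troublesome strategy is to color $c$-points near the median of the current $T$ using exactly the color Bob is about to play next, splitting $T$ and constraining Bob's prefix-or-suffix choice. The argument must verify that the freedom to switch between prefix and suffix versions, combined with the factor-$2$ redundancy of the $2k$-copies hypothesis, suffices to preserve roughly half of $T$ in each round and keep every $C'$-point type Bob needs available under all possible Alice responses, so that $T$ is non-empty at the end.
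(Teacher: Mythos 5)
Your high-level plan coincides with the paper's: Bob assembles a rainbow set of colored $C'$-points whose incomparability prefixes/suffixes share a surviving uncolored point of $C$, with a halving analysis and the $2k$-fold redundancy guaranteeing that a fresh copy of each interval type is always available. However, there is a genuine gap in the strategy itself: you commit Bob to using color $j$ in round $j$. This fixed, predictable order lets Alice kill a color outright before Bob reaches it. Concretely, in round $1$ she colors the minimum point of $C$ with color $3$, and in round $2$ she colors the maximum point of $C$ with color $3$ (legal: the two points are comparable, and nothing else has color $3$ yet). Every nonempty interval of $C$ containing the minimum contains the minimum point, and likewise for the maximum, so in round $3$ there is no admissible $I_3$ at all, and no point of $C'$ can ever receive color $3$; hence every uncolored point of $C$ remains colorable with color $3$ forever and the plan collapses. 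More generally, two of Alice's moves placed at the extremes of the current candidate set $T$ with a color Bob will only play two rounds later destroy that color. Your ``main obstacle'' paragraph points at the wrong worst case: coloring near the median of $T$ with the color Bob is about to play costs only a factor of $2$, which your analysis absorbs, whereas the dangerous move is coloring near the two ends of $T$ with a color Bob will play \emph{later}.

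The repair --- and what the paper's sketch is implicitly doing by saying only that ``a point with a new color is added to $A$'' each round --- is to let Bob choose \emph{adaptively} which unused color to introduce: whenever Alice has just placed an unused color $\gamma$ on a point $p$ of $C$, Bob immediately answers by putting $\gamma$ on a $C'$-point whose interval is the prefix or suffix lying on the larger side of $T$ relative to $p$ (losing at most half of $T$); otherwise he introduces any color untouched on $C$ and loses nothing. Since Alice adds at most one point per round, no unused color ever carries two points of $C$ when Bob comes to consume it, so no color can be killed and the halving bound $|T|\geq |C|/2^{k}-O(k)$ becomes legitimate. Two smaller points: your availability count should charge Bob's own moves as well as Alice's against the $2k$ copies (the total is at most $2(k-1)<2k$, so it still works), and you should say what Bob does if Alice colors points of $C'$ with an unused color, which either hands Bob that color for free or forces him to restrict $T$ to the covered or uncovered half.
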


\begin{proof}[Proof idea]
Bob maintains a set $A$ of colored points of $C'$ and an interval $I\subseteq\bigcap_{x\in A}I_\posP(x)$ of uncolored points of $C$ so that in each round, a point with a new color is added to $A$ and $I$ does not get too much smaller.
He wins when all colors have been used on $A$ and still $I\neq\emptyset$.
\end{proof}

\begin{proof}[Proof of Theorem \ref{thm:Bob_infinite_strategy_coloring}]
If $w\geq 2$ and $\frac{a}{b}<2$, then $\valg(a,b,w)\geq\vala(\lfloor\frac{a}{b}\rfloor,1,w)\geq\vala(1,1,2)=\infty$, by Lemma \ref{lem:auxiliary_coloring_game}, the fact that $\valg(0,1,w)\geq\valg(1,1,w)\geq\valg(1,1,2)$, and Lemma \ref{lem:1_1_coloring_lower_bound}.
\end{proof}

\begin{proof}[Proof of Theorem \ref{thm:Bob_infinite_strategy_Grundy}]
If $w\geq 2$ and $\frac{a}{b}<1$, then $\grg(a,b,w)\geq\gra(\lfloor\frac{a}{b}\rfloor,1,w)\geq\gra(0,1,2)=\infty$, by Lemma \ref{lem:auxiliary_coloring_game}, the fact that $\gra(0,1,w)\geq\gra(0,1,2)$, and Theorem \ref{thm:First-Fit-infinity}.
\end{proof}

\begin{proof}[Proof sketch of Theorem \ref{thm:Havet_Zhu}]
Let $k\geq 1$, $\posQ$ be the poset defined in Lemma \ref{lem:1_1_coloring_lower_bound} for $k$, and $\posP$ be the poset defined in Lemma \ref{lem:auxiliary_coloring_game} for $\posQ$ and $n$ large enough.
Hence the width of $\posP$ is $2$ and $\valg(1,1,\posQ)>k$.
Let $(R,\leq_R)$ be the poset defined so that $R=\{r_1,\ldots,r_{10}\}$ and $r_i\leq_Rr_j$ if and only if $i+1<j$.
It is easy to verify that $\posP$ contains no subposet isomorphic to $(R,\leq_R)$.
Therefore, by Theorem \ref{thm:First-Fit-bounded}, there is an absolute constant $c$ such that $\Gamma(\posP)\leq c$.
\end{proof}
 
\begin{lem}
\label{lem:auxiliary_game_exponential_lower_bound}
Let\/ $a\geq 2$ and\/ $w\geq 2$.
Let\/ $\posP$ be a poset that consists of\/ $w$ chains\/ $C,C_1,\ldots,C_{w-1}$ such that for\/ $1\leq i\leq w-1$ and for every interval\/ $I$ of\/ $C$ of size\/ $n_i$, the chain\/ $C_i$ contains\/ $(a+1)k$ points\/ $x$ with\/ $I_\posP(x)\cap C=I$.
If\/ $k<(1+1/\lfloor\frac{a}{2}\rfloor)^{w-1}$ and\/ $|C|\gg n_1\gg\cdots\gg n_{w-1}\gg k$, then\/ $\vala(a,1,\posP)>k$.
Consequently, $\vala(a,1,w)\geq(1+1/\lfloor\frac{a}{2}\rfloor)^{w-1}$.
\end{lem}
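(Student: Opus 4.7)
The plan is to prove this by induction on $w$, generalizing the two-chain strategy of Lemma \ref{lem:1_1_coloring_lower_bound} into a layered construction that achieves the exponential amplification. Throughout, set $m = \lfloor a/2 \rfloor$.

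For the base case $w = 2$, the hypothesis $k < 1 + 1/m \leq 2$ forces $k = 1$, so $\vala(a,1,\posP) > 1$ holds trivially since $\posP$ contains incomparable pairs (any point of $C_1$ and the corresponding interval in $C$). For the inductive step, I would choose an integer $k'$ with $km/(m+1) - 1 < k' < (1+1/m)^{w-2}$; such $k'$ exists because $k < (1+1/m)^{w-1}$ guarantees the interval has length exceeding $1$. Applying the inductive hypothesis to the subposet $\posP'$ formed by $C, C_1, \ldots, C_{w-2}$ (the structural assumption carries over since $(a+1)k \geq (a+1)k'$) yields a strategy $\sigma'$ for Bob that forces more than $k'$ colors in the auxiliary $(a,1)$-game on $\posP'$.

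Bob's strategy on $\posP$ would maintain an uncolored interval $I \subseteq C$ anchored at an endpoint of $C$, a set $A_{w-1} \subseteq C_{w-1}$ of freshly-colored points all incomparable to $I$, and an ongoing execution of $\sigma'$ whose colored points on $C_1 \cup \cdots \cup C_{w-2}$ also remain incomparable to $I$. He interleaves two tracks: roughly once every $m+1$ rounds he plays a fresh-colored \emph{amplification} move on $C_{w-1}$, selecting a point incomparable to $I$ from the $(a+1)k$ witnesses guaranteed by the structural assumption; in the remaining rounds he advances $\sigma'$. Alice erodes $I$ by at most $a$ positions per round and invalidates at most $a$ witnesses per round in each $C_j$, so over the bounded number of rounds she causes $O(ak)$ damage, all of which is absorbed by the hierarchy $|C| \gg n_1 \gg \cdots \gg n_{w-1} \gg k$. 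Once $\sigma'$ has forced more than $k'$ colors, the amplification track will have accumulated at least $\lceil (k'+1)/m \rceil$ further distinct colors on $A_{w-1}$, disjoint from those of $\sigma'$, for a total exceeding $(k'+1)(1 + 1/m) > k$.

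The main obstacle will be reconciling the two tracks without mutual interference. In particular, Alice's responses to amplification moves can shrink $I$ in ways that could invalidate $\sigma'$'s invariants, which are formulated on some interval of $C$. The natural remedy is to strengthen the inductive hypothesis so that $\sigma'$ is required to succeed starting from any sufficiently large initial uncolored sub-interval of $C$ (rather than all of $C$), thereby allowing its invariants to track the current $I$ as it shrinks. Matching the color accounting between the two tracks to the multiplicative bound $(1+1/m)$, while keeping their color sets formally disjoint and verifying that the witness pools remain nonempty under the $(a+1)k$-witness assumption, is the most delicate part of the argument.
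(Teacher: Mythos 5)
There is a genuine gap, and it lies in the decomposition itself: you peel off $C_{w-1}$, the chain whose witnesses correspond to intervals of the \emph{smallest} size $n_{w-1}$, and you open that front in the very first rounds, concurrently with $\sigma'$. For a point $x\in C_{w-1}$ to enter $A_{w-1}$, i.e.\ to be incomparable to all of the candidate interval $I$, you need $I\subseteq I_\posP(x)\cap C$, an interval of size $n_{w-1}$; so your first amplification move already forces $|I|\leq n_{w-1}$ for the rest of the game. Once the candidate interval is that small, Alice has a cheap \emph{permanent} defence: each round she takes $\lfloor a/2\rfloor$ colors that are not yet blocked and, for each such color $\gamma$, colors the two extreme points of the current $I$ with $\gamma$. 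Whatever uncolored subinterval $I'$ Bob retreats to lies strictly between two $\gamma$-colored points of $C$ at distance at most $n_{w-1}\leq n_j$ for every $j$; hence every witness in every $C_j$ whose interval meets $I'$ contains one of these two points, is incomparable to it, and can never legally receive $\gamma$. So $\gamma$ stays available on $I'$ forever. Alice protects $\lfloor a/2\rfloor$ colors per round in this irrevocable way while Bob blocks at most one, so only about $k/(\lfloor a/2\rfloor+1)$ colors ever get blocked and the strategy fails. This flanking defence is precisely what the paper's matching upper bound (the $w$-game lemma, with its relation $I\Subset J$) formalizes, and it is why the paper's Bob processes the chains \emph{sequentially from the largest scale to the smallest}: a protecting pair placed while the interval has size about $n_i$ flanks a gap far larger than $n_{i+1}$ and is defeated when Bob descends to scale $n_{i+1}$, whereas a pair placed around an interval of size at most $n_{w-1}$ is unbeatable. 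The fix is to reverse your decomposition: run a complete first phase on $C_1$ alone (scale $n_1$) until a $1/(\lfloor a/2\rfloor+1)$ fraction of the palette is blocked, and only then hand the surviving interval and the reduced palette to the recursive strategy on $C,C_2,\ldots,C_{w-1}$ --- which is the paper's argument written as an induction.

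Two secondary problems would persist even after reordering. First, the disjointness of the colors gathered by the two tracks is not something Bob can arrange unilaterally: Alice shares the palette and will reuse the amplification colors inside the other track, so the inductive statement must be strengthened to ``forces $k'+1$ blocked colors outside an adversarially growing excluded set of prescribed size,'' not merely to ``works from any large initial subinterval'' as you propose. Second, $\sigma'$ is by hypothesis a winning strategy in a game played with a palette of $k'$ colors on $\posP'$, and simulating it inside a $k$-color game with outside interference requires a translation between palettes that you have not specified; the paper sidesteps this entirely by maintaining one explicit invariant (the fraction of colors blocked after each phase) rather than invoking the lemma recursively. Your numerical bookkeeping (the existence of an integer $k'$ with $k'+1>km/(m+1)$ and $k'<(1+1/m)^{w-2}$, and the conclusion $(k'+1)(1+1/m)>k$) and the base case $w=2$ are correct.
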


\begin{proof}[Proof idea]
Bob plays like in the proof of Lemma \ref{lem:1_1_coloring_lower_bound} in $w-1$ phases, in the $i$th phase adding points of $C_i$ to $A$ until at least the $1/(\lfloor\frac{a}{2}\rfloor+1)$ fraction of the colors remaining after the previous phase have been used on $A\cap C_i$.
It follows that all colors are used on $A$ after all phases.
\end{proof}

\begin{proof}[Proof of Theorem \ref{thm:exponential_lower_bound}]
If $w\geq 2$ and $\frac{a}{b}\geq 2$, then $\valg(a,b,w)\geq\vala(\lfloor\frac{a}{b}\rfloor,1,w)\geq(1+1/\lfloor\frac{a}{2b}\rfloor)^{w-1}$, by Lemma \ref{lem:auxiliary_coloring_game}, Lemma \ref{lem:auxiliary_game_exponential_lower_bound}, and the fact that $\lfloor\frac{1}{2}\lfloor\frac{a}{b}\rfloor\rfloor=\lfloor\frac{a}{2b}\rfloor$.
\end{proof}

\section{Upper bound for the \texorpdfstring{$(2,1)$}{(2,1)}-coloring game}
\label{sec:upper_bound}

Let $w\geq 1$, $(C,\leq)$ be a chain, and $\scrI$ be a family of intervals of $(C,\leq)$ with no sequence of $w$ intervals $I_1\Subset\cdots\Subset I_w$, where $I\Subset J$ denotes that $I\subset J$ and $I$ does not contain the minimum and maximum points of $J$.
The \emph{$w$-game} on $(C,\leq)$ with interval set $\scrI$ and color set $\Gamma$ is played by two players, Presenter and Painter, who can take two kinds of actions.
One is that Presenter presents an interval $I\in\scrI$ and specifies a color $f(I)\in\Gamma$ called the \emph{forbidden color} for $I$.
The other is that Presenter or Painter assigns a color from $\Gamma\cup\{\delete\}$ to a point of $C$, where $\delete$ is a special color that cannot be forbidden for any presented interval.
Doing so, the players must obey the rule that the color of every colored point $x$ is different from $f(I)$ for any presented interval $I\ni x$.
Every round of the $w$-game proceeds according to one of the following scenarios:
\begin{enumerate}
\item\label{k-game:color} Presenter assigns an available color from $\Gamma\cup\{\delete\}$ to an uncolored point of $C$.
\item\label{k-game:interval} Presenter presents an interval $I\in\scrI$ and specifies its forbidden color $f(I)$.
Painter replies by assigning available colors from $\Gamma$ to $0$, $1$ or $2$ uncolored points of~$I$.
\item\label{k-game:point} Presenter picks an uncolored point of $C$ and asks Painter to color it.
Painter has to reply with an available color from $\Gamma$.
If no color is available, then Presenter wins the $w$-game.
\end{enumerate}
Painter wins the $w$-game when all points of $C$ have been colored.

\begin{lem}
Painter has a winning strategy in every\/ $w$-game played with\/ $2^{w-1}$ colors.
\end{lem}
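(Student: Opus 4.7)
I would prove this by induction on $w$.

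For the base case $w=1$, the hypothesis that $\scrI$ contains no $\Subset$-chain of length $1$ forces $\scrI=\varnothing$, so scenario~\eqref{k-game:interval} never fires and no forbidden colors are ever created; Painter responds to any scenario~\eqref{k-game:point} request with the unique color, and wins because no point can ever have an unavailable color.

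For the inductive step, I would split the palette as $\Gamma=\Gamma_0\sqcup\Gamma_1$ with $|\Gamma_0|=|\Gamma_1|=2^{w-2}$ and exploit the fact that for any $I\in\scrI$ the sub-family $\{J\in\scrI:J\Subset I\}$ has $\Subset$-depth at most $w-2$, so the game restricted to $I$'s interior is a $(w-1)$-game to which the induction hypothesis applies with $2^{w-2}$ colors. The plan is to have Painter maintain a dynamic laminar partition of the currently uncolored part of $C$ into \emph{zones}, each zone $Z$ labeled with a half-palette $\Gamma_{j(Z)}$, and within $Z$ Painter plays the inductive $(w-1)$-game strategy on the complementary half-palette $\Gamma_{1-j(Z)}$.

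When Presenter presents an interval $I$ with forbidden color $f\in\Gamma_j$ in scenario~\eqref{k-game:interval}, Painter uses her two coloring moves to place two \emph{barrier} points near the left and right ends of $I$, coloring them with distinct colors drawn from $\Gamma_j$. This commitment refines the laminar partition by creating a new inner zone attached to $I$ with label $1-j$, so that the nested sub-game inside $I$ proceeds on the complementary half-palette $\Gamma_{1-j}$ and falls under the inductive hypothesis; moreover, the barrier colorings from $\Gamma_j$ prevent Presenter from ever choosing a forbidden color from $\Gamma_j$ on a future interval crossing one of the barrier points, which is what keeps the invariant consistent as the laminar structure grows.

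The main obstacle I foresee is arranging the barrier placement so that (i)~the two barrier points are always uncolored and the intended colors are still available when Painter needs them, and (ii)~for $\Subset$-incomparable top-level intervals sharing a point of $C$, the resulting zone refinements remain mutually compatible. My plan is to take the barriers to be the leftmost and rightmost uncolored points of $I$ inside its current zone, and then use the bound $w-1$ on the $\Subset$-depth of $\scrI$ to bound how often the zone structure can be refined at any single point. The argument would close by verifying, as a quantitative invariant preserved by every move, that at every uncolored point $x$ at most $2^{w-1}-1$ colors have been forbidden so far, which guarantees that Painter can always respond to scenario~\eqref{k-game:point} and hence wins.
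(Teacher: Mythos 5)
Your overall skeleton---induction on $w$, halving the palette, spending Painter's two coloring moves of scenario (2) on ``barrier'' points near the ends of the presented interval, and recursing on a $(w-1)$-game inside---does match the shape of the paper's proof, but the central mechanism is broken. Coloring a barrier point with one color $\gamma\in\Gamma_j$ only prevents Presenter from later choosing the forbidden color $\gamma$ itself on intervals through that point; it does nothing about the other $2^{w-2}-1$ colors of $\Gamma_j$. So your key claim that ``the barrier colorings from $\Gamma_j$ prevent Presenter from ever choosing a forbidden color from $\Gamma_j$ on a future interval crossing one of the barrier points'' is false for $w\geq 3$. Concretely, Presenter can present a long staircase of pairwise $\Subset$-incomparable intervals all containing a common uncolored point $x$, with pairwise distinct forbidden colors; nothing in your scheme stops these from exhausting the palette at $x$, after which scenario (3) at $x$ defeats Painter. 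Your closing ``quantitative invariant'' (at most $2^{w-1}-1$ colors forbidden at each uncolored point) is exactly the statement that needs proving, and no argument for it is given. The compatibility problem you flag in (ii) is a symptom of the same defect: overlapping $\Subset$-incomparable intervals produce non-laminar ``inner zones'' with conflicting half-palette labels on a possibly large common uncolored part, and two coloring moves per presented interval cannot repair that.

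The paper's fix is to use a \emph{product} palette $\{1,\ldots,2^{w-2}\}\times\{0,1\}$ rather than a partition into two halves, so that blocking is done one coupled pair at a time: when $I$ is presented with $f(I)=(i,j)$ and no $J\Supset I$ was previously presented with $f(J)=(i,1-j)$, Painter colors the two extreme uncolored points of $I$ with the single partner color $(i,1-j)$, which genuinely makes $(i,1-j)$ unavailable as a forbidden color on every later interval that meets $I$ in an uncolored point without being $\Subset I$. The invariant is maintained separately for each first coordinate $i$, guaranteeing that at an uncolored point at most one of the two shades $(i,0),(i,1)$ is ever forbidden unless genuine $\Subset$-nesting occurs; only that nested situation is delegated, with forbidden color $i$, to a \emph{single} simulated $(w-1)$-game on the quotient palette $\{1,\ldots,2^{w-2}\}$ played on all of $C$, rather than to separate per-zone games. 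To salvage your write-up, replace the two disjoint half-palettes by $2^{w-2}$ coupled pairs and redo the barrier step per pair.
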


\begin{proof}[Proof sketch]
The proof goes by induction on $w$.
The case $w=1$ is trivial, so let $w\geq 2$.
Painter plays the $w$-game with color set $\{1,\ldots,2^{w-2}\}\times\{0,1\}$ simulating the $(w-1)$-game with color set $\{1,\ldots,2^{w-2}\}$ on some intervals that are not $\Subset$-maximal and keeping the following invariant:
\begin{enumerates}
\item\label{invariant} For every $i\in\{1,\ldots,2^{w-2}\}$, if an interval $I$ (presented or not) has no points of colors $(i,0)$, $(i,1)$ and an interval $J$ has been presented with $f(J)=(i,j)$, $j\in\{0,1\}$, then either $I\cap J$\pagebreak[4] has no uncolored points or $I\Subset J$.
In the latter case, if $I$ has been presented with $f(I)=\linebreak[4](i,1-j)$, then $I$ has been passed to the simulated $(w-1)$-game with forbidden color $i$.
\end{enumerates}
\emph{Case \ref{k-game:color}}:
Presenter assigns color $(i,j)$ or $\delete$ to a point $x$.
Painter simulates Presenter's move in the $(w-1)$-game assigning color $i$ or $\delete$, respectively, to $x$.\\
\emph{Case \ref{k-game:interval}}:
Presenter presents an interval $I$ and specifies $f(I)=(i,j)$.
If no interval $J\Supset I$ has been presented before with $f(J)=(i,1-j)$, then Painter can (and does) assign color $(i,1-j)$ to at most two points in $I$ so that \ref{invariant} is preserved; Painter also simulates Presenter's move in the $(w-1)$-game assigning color $\delete$ to these points.
Now, suppose an interval $J\Supset I$ has been presented before with $f(J)=(i,1-j)$.
Painter simulates Presenter's move in the $(w-1)$-game presenting $I$ with forbidden color $i$ and observes Painter's strategy in the $(w-1)$-game coloring $0$, $1$ or $2$ points of $I$.
For each such point $x$ assigned color $i'$ in the $(w-1)$-game, Painter colors $x$ in the $w$-game using an available color $(i',j')$, which exists by \ref{invariant}.\\
\emph{Case \ref{k-game:point}}:
Presenter asks Painter to color a point $x$.
Painter simulates this move of Presenter in the $(w-1)$-game and observes Painter's strategy in the $(w-1)$-game assigning a color $i$ to $x$.
Painter responds in the $w$-game with an available color $(i,j)$, which exists by \ref{invariant}.
\end{proof}

\begin{proof}[Proof sketch of Theorem \ref{thm:upper_bound}]
Let $(P,\leq)$ be a poset of width $w$, $C_1,\ldots,C_w$ be a partition of $P$ into $w$ chains, and $\Gamma_1,\ldots,\Gamma_w$ be pairwise disjoint sets each of size $2^{w-1}$.
Let $\scrI_i=\{I_i(x)\colon x\in P\setminus C_i\}$, where $I_i(x)$ denotes the set of points in $C_i$ incomparable to $x$.
Alice plays on $P$ with color set $\Gamma_1\cup\cdots\cup\Gamma_w$ simulating the $w$-game played on $C_i$ with interval set $\scrI_i$ and color set $\Gamma_i$ for $1\leq i\leq w$.
If Bob assigns a color $\gamma\in\Gamma_i$ to a point $x\in C_i$, then Alice simulates Presenter's move \ref{k-game:color} on $C_i$ assigning color $\gamma$ to $x$.
If Bob assigns a color $\gamma\in\Gamma_i$ to a point $x\in C_j$, where $i\neq j$, then Alice simulates Presenter's move \ref{k-game:color} on $C_j$ assigning color $\delete$ to $x$ and Presenter's move \ref{k-game:interval} on $C_i$ presenting the interval $I_i(x)$ and specifying $\gamma$ as its forbidden color, and she replies according to Painter's response on $C_i$ assigning colors from $\Gamma_i$ to $0$, $1$ or $2$ points of $C_i$.
Painter's ability to reply in scenarios \ref{k-game:point} ensures that Alice can make `idle' moves (so as to color exactly $2$ points in each round) and Bob never achieves a winning position.
\end{proof}

\bibliographystyle{abbrv}
\bibliography{partitioning_game_bib}

\begin{thebibliography}{10}

\bibitem{Bod91}
H.~L. Bodlaender.
\newblock On the complexity of some coloring games.
\newblock {\em Int. J. Found. Comput. Sci.}, 2(2):133--147, 1991.

\bibitem{BKM13}
B.~Bosek, T.~Krawczyk, and G.~Matecki.
\newblock First-fit coloring of incomparability graphs.
\newblock {\em SIAM J. Discrete Math.}, 27(1):126--140, 2013.

\bibitem{DZ99}
T.~Dinski and X.~Zhu.
\newblock A bound for the game chromatic number of graphs.
\newblock {\em Discrete Math.}, 196(1--3):109--115, 1999.

\bibitem{FKKT93}
U.~Faigle, W.~Kern, H.~A. Kierstead, and W.~T. Trotter.
\newblock On the game chromatic number of some classes of graphs.
\newblock {\em Ars Combin.}, 35(17):143--150, 1993.

\bibitem{Gar81}
M.~Gardner.
\newblock Mathematical games.
\newblock {\em Sci. Amer.}, 244(4):18--26, 1981.

\bibitem{GZ99}
D.~J. Guan and X.~Zhu.
\newblock Game chromatic number of outerplanar graphs.
\newblock {\em J. Graph Theory}, 30(1):67--70, 1999.

\bibitem{HZ13}
F.~Havet and X.~Zhu.
\newblock The game {G}rundy number of graphs.
\newblock {\em J. Combin. Optim.}, 25(4):752--765, 2013.

\bibitem{Kie81}
H.~A. Kierstead.
\newblock An effective version of {D}ilworth's theorem.
\newblock {\em Trans. Amer. Math. Soc.}, 268(1):63--77, 1981.

\bibitem{Kie00}
H.~A. Kierstead.
\newblock A simple competitive graph coloring algorithm.
\newblock {\em J. Combin. Theory Ser. B}, 78(1):57--68, 2000.

\bibitem{Kie05}
H.~A. Kierstead.
\newblock Asymmetric graph coloring games.
\newblock {\em J. Graph Theory}, 48(3):169--185, 2005.

\bibitem{KT94}
H.~A. Kierstead and W.~T. Trotter.
\newblock Planar graph coloring with an uncooperative partner.
\newblock {\em J. Graph Theory}, 18(6):569--584, 1994.

\bibitem{KT96}
H.~A. Kierstead and Z.~Tuza.
\newblock Game coloring numbers and treewidth.
\newblock manuscript.

\bibitem{KY05}
H.~A. Kierstead and D.~Yang.
\newblock Very asymmetric marking games.
\newblock {\em Order}, 22(2):93--107, 2005.

\bibitem{WZ08}
J.~Wu and X.~Zhu.
\newblock Lower bounds for the game colouring number of partial {$k$}-trees and
  planar graphs.
\newblock {\em Discrete Math.}, 308(12):2637--2642, 2008.

\bibitem{YZ08}
D.~Yang and X.~Zhu.
\newblock Activation strategy for asymmetric marking games.
\newblock {\em European J. Combin.}, 29(5):1123--1132, 2008.

\bibitem{Zhu99}
X.~Zhu.
\newblock The game coloring number of planar graphs.
\newblock {\em J. Combin. Theory Ser. B}, 75(2):245--258, 1999.

\bibitem{Zhu00}
X.~Zhu.
\newblock The game coloring number of pseudo partial {$k$}-trees.
\newblock {\em Discrete Math.}, 215(1--3):245--262, 2000.

\bibitem{Zhu08}
X.~Zhu.
\newblock Refined activation strategy for the marking game.
\newblock {\em J. Combin. Theory Ser. B}, 98(1):1--18, 2008.

\end{thebibliography}

\end{document}